\newtheorem{theorem}{Theorem}[section]
\newtheorem{lm}[theorem]{Lemma}
\newtheorem{thm}[theorem]{Theorem}
\newtheorem{cor}[theorem]{Corollary}
\newtheorem{prop}[theorem]{Proposition}
\newcommand{\abs}[1]{\left\lvert #1 \right\rvert}
\def\F{\mathbb{F}}
\def\C{\mathbb{C}}
\def\Z{\mathbb{Z}}
\theoremstyle{definition}
\newtheorem{rem}[theorem]{Remark}
\title[A note on the set ${A(A+A)}$]{A note on the set
$\boldsymbol{A(A+A)}$}\author[P.-Y. Bienvenu]{Pierre-Yves Bienvenu}
\address{P.-Y. Bienvenu, Univ Lyon,  CNRS, ICJ UMR 5208, 
69622 Villeurbanne cedex, France}
\email{pbienvenu@math.univ-lyon1.fr}
\author[F. Hennecart]{Fran\c cois Hennecart}
\address{F. Hennecart, Univ Lyon, UJM-Saint-\'Etienne, CNRS, ICJ UMR 5208, 42023
Saint-\'Etienne, France}
\email{francois.hennecart@univ-st-etienne.fr}
\author[I. Shkredov]{Ilya Shkredov}
\address{I.D. Shkredov, Steklov Mathematical Institute, Division of Algebra and
Number Theory,
ul. Gubkina, 8, Moscow, Russia, 119991 and IITP RAS, Bolshoy Karetny per. 19,
Moscow, Russia,
127994}
\email{ilya.shkredov@gmail.com
}
\thanks{This work was performed within the framework of the LABEX MILYON
(ANR-10-LABX-0070) of Universit\'e de Lyon, within the program ``Investissements
d'Avenir" (ANR-11-IDEX-0007) operated by the French National Research Agency (ANR)}
\subjclass[2010]{11B75}
\begin{document}
\begin{abstract}
Let $p$ a large enough prime number. When $A$ is a subset of
$\mathbb{F}_p\smallsetminus\{0\}$ of cardinality
$|A|> (p+1)/3$, then an application of Cauchy-Davenport Theorem gives
$\mathbb{F}_p\smallsetminus\{0\}\subset A(A+A)$. 
In this note,  we improve on this and we show that if  $|A|\ge 0.3051 p$ implies
$A(A+A)\supseteq\mathbb{F}_p\smallsetminus\{0\}$. In the opposite direction we show
that there exists a set $A$ such that
$|A| > (1/8+o(1))p$ and $\mathbb{F}_p\smallsetminus\{0\}\not\subseteq A(A+A)$.
\end{abstract}
\maketitle

\section{Introduction}

The aim of this note is to
study the size of the set $A(A+A)=\{a(b+c)\,:\, a,b,c\in A\}$ for a subset
$A\subseteq \F_p\smallsetminus\{0\}$.
This sort of problems belongs to the realm of expanding polynomials and
sum-product problems. In the literature, they are usually discussed in the
sparse set regime; for instance, Roche-Newton \textit{et al.} \cite{Roche},
Aksoy Yazici \textit{et al.} \cite{Aksoy}
proved that in the regime where $\abs{A}\ll p^{2/3}$,
one has
$\min(\abs{A+AA},\abs{A(A+A)})\gg |A|^{3/2}$ (see also  \cite{StevensZeuw}).
This implies in particular that as soon as $\abs{A}\gg p^{2/3}$, 
both sets  $A(A+A)$ and $A+AA$ occupy a positive proportion of $\F_p$.

Now we focus in the case where $A\subseteq \F_p$ occupies already a positive
proportion of $\F_p$. Let $\alpha=\abs{A}/p$, so we suppose
that
$\alpha >0$ is bounded below by a positive constant while
$p$ tends to infinity.
We will see that in this case the set
$A(A+A)$ contains all but a finite number of elements.
Besides, we prove that this finite number of elements may be strictly 
larger than one, unless
$\alpha$ is large enough.

\medskip
Here are our main results.
\begin{thm}
\label{Hennecart}
Let $A\subseteq \mathbb{F}_p$
 so that $|A|=\alpha p$ with $\alpha\ge 0.3051$. Then for any large enough prime
$p$, we
 have $A(A+A)\supseteq \mathbb{F}_p\smallsetminus\{0\}$.
\end{thm}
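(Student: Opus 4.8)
The plan is to fix a nonzero target $t\in\F_p$ and prove that the number $N(t)$ of triples $(a,b,c)\in A^3$ with $a(b+c)=t$ is strictly positive. Since $a(b+c)=t$ is equivalent to $b+c=t/a$, this amounts to showing that the dilated inverse set $tA^{-1}=\{t/a:a\in A,\ a\neq0\}$ meets the sumset $A+A$; writing $r(s)=\abs{\{(b,c)\in A^2:b+c=s\}}$ for the representation function of $A+A$, we have $N(t)=\sum_{a\in A}r(t/a)$. First I would expand $r$ by additive characters. With $e_p(u)=\exp(2\pi iu/p)$, $\widehat S(x)=\sum_{s\in S}e_p(sx)$, and $B=A^{-1}$ (which has size $\alpha p$ up to the single element $0$), a direct computation gives
\[
N(t)=\frac1p\sum_{x\in\F_p}\widehat A(x)^2\,\widehat B(-tx),
\]
whose $x=0$ term is the main term $\abs A^2\abs B/p=\alpha^3p^2$. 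It therefore suffices to bound the error $E(t)=\frac1p\sum_{x\neq0}\widehat A(x)^2\widehat B(-tx)$ by something strictly smaller than $\alpha^3p^2$.

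Next I would estimate $E(t)$. The crude bound $\abs{E(t)}\le\frac1p\max_{y\neq0}\abs{\widehat B(y)}\sum_{x\neq0}\abs{\widehat A(x)}^2=\alpha(1-\alpha)p\cdot\max_{y\ne0}\abs{\widehat B(y)}$, using $\sum_{x\ne0}\abs{\widehat A(x)}^2=\alpha(1-\alpha)p^2$, already reduces matters to controlling the largest nontrivial Fourier coefficient of the inverse set $B=A^{-1}$: it beats the main term precisely when $\max_{y\neq0}\abs{\widehat B(y)}<\alpha^2p/(1-\alpha)$. Alternatively, a symmetric Cauchy--Schwarz in $x$ recasts the error in terms of the mixed additive--multiplicative energy
\[
T(t)=\abs{\{(a_1,a_2,b_1,b_2)\in A^2\times B^2:\ a_1-a_2=t(b_1-b_2)\}}=\sum_{d}r_{A-A}(d)\,r_{B-B}(d/t),
\]
since $\sum_x\abs{\widehat A(x)}^2\abs{\widehat B(-tx)}^2=pT(t)$; after removing the $x=0$ contribution the task becomes an upper bound $T(t)\le(1+o(1))\alpha^4p^3$ that is uniform in $t$ and matches the expected value.

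The hard part is exactly this last estimate. Both $\max_{y\ne0}\abs{\widehat B(y)}$ and the energy $T(t)$ measure the extent to which $A$ can be simultaneously additively and multiplicatively structured, and a purely Cauchy--Davenport input — which only controls $\abs{A+A}$ and hence only yields the classical threshold $1/3$ — must be supplemented by a genuine sum--product ingredient to go below it. I would handle this by a dichotomy on $A^{-1}$: if every nontrivial $\abs{\widehat B(y)}$ is small, then the first display of this paragraph closes immediately; whereas a large Fourier coefficient of $A^{-1}$ forces strong additive structure on $A^{-1}$, equivalently multiplicative structure on $A$, a regime in which the dilates $tA^{-1}$ can be shown to equidistribute across $A+A$ and treated directly.

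Balancing the admissible loss in the two cases is what replaces the constant $1/3$ by the sharper $0.3051$: one optimizes the pseudorandom estimate against the worst structured configuration, checks that the diagonal term $\abs A\,\abs B$ and the other lower-order contributions to $T(t)$ are negligible once $p$ is large, and reads off the threshold from the resulting inequality. I expect the equidistribution step in the structured case to be the genuine obstacle, since it is there that the additive and multiplicative worlds must be played off against each other quantitatively rather than merely qualitatively.
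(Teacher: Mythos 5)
Your setup is sound as far as it goes: the identity $N(t)=\frac1p\sum_x\widehat A(x)^2\widehat B(-tx)$ with $B=A^{-1}$, the main term $\alpha^3p^2$, and the reduction of the pseudorandom case to $\max_{y\neq0}\abs{\widehat B(y)}<\alpha^2p/(1-\alpha)$ are all correct computations. But the proposal has a genuine gap, and it sits exactly where you locate it yourself: the structured case is never carried out, and as framed it cannot be closed. Two concrete problems. First, your dichotomy is on the Fourier coefficients of the \emph{inverse} set $A^{-1}$, and the claim that a large coefficient of $A^{-1}$ forces ``additive structure on $A^{-1}$, equivalently multiplicative structure on $A$'' is false: the map $x\mapsto x^{-1}$ does not convert arithmetic progressions into geometric ones, so largeness of $\widehat{1_{A^{-1}}}$ gives you interval-type structure on $A^{-1}$ and nothing usable on $A$. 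Second, even staying with $A^{-1}$, the only doubling information the hypothesis $t\notin A(A+A)$ yields is on $A$ itself (from $tA^{-1}\cap(A+A)=\varnothing$ one gets $\abs{A+A}\le p-\abs A$); there is no bound on $\abs{A^{-1}+A^{-1}}$, so no Freiman-type inverse theorem can be applied to $A^{-1}$, and the asserted equidistribution of $tA^{-1}$ across $A+A$ in the structured regime is precisely what fails generically there. Without a mechanism in that case, no inequality is ever produced, and in particular the threshold $0.3051$ cannot be extracted from your outline.

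The paper resolves exactly this difficulty by transferring the Fourier information from the inverse side to $A$ itself before invoking structure. Assuming a missing element $x$, it considers the two representation functions $r_1(y)$ (for $y=xa^{-1}-b$) and $r_2(y)$ (for $y=x(c+d)^{-1}$), both supported on at most $p-\abs A$ points, and runs Cauchy--Schwarz against their energies; the inverse-set Fourier coefficients that obstruct your argument are controlled there by Weil's bound for complete Kloosterman sums, giving $\abs{\widehat{r_2}(h)}\ll\sqrt p\,\abs A$. The upshot is a lower bound $\gamma\ge\sqrt2\,\alpha/(1-\alpha)+o(1)$ on the normalized Fourier maximum of $A$ (not of $A^{-1}$), which via Freiman's interval lemma places a subset $A_1\subseteq A$ with $\abs{A_1}\ge(1+\gamma)\abs A/2$ in an interval; combined with the doubling bound $\abs{A+A}\le(1-\alpha)p$ this makes Freiman's $3k-4$ theorem applicable (the requirement $c_1<3$ is where $\alpha>0.29513\ldots$ enters), trapping $A_1$ in a short progression $P_1$. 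The contradiction then comes not from equidistribution of $tA^{-1}$ but from a second, incomplete Kloosterman-sum count of solutions to $a=b^{-1}x$ over the enveloping progressions $P_1$ and $P_1+P_1$, yielding $\alpha_1(c_1-1)^2\le c_1-2+o(1)$ and, after optimization, the numerical threshold. If you want to salvage your plan, you must (i) rerun your dichotomy with $\widehat A$ in place of $\widehat B$, which requires an analogue of the paper's energy-plus-Kloosterman step to handle the inverse set, and (ii) replace the unproved equidistribution claim in the structured case with a quantitative counting argument of this kind.
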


For smaller densities, we have the following result.
\begin{theorem}
\label{almostAll}
Let $A\subseteq \mathbb{F}_p\smallsetminus\{0\}$ and $0<\alpha<1$ satisfy
$\abs{A}\geq \alpha p$. Then one has 
$|A(A+A)| > p-1-\alpha^{-3}(1-\alpha)^2+o(1)$.
\end{theorem}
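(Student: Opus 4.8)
The plan is to bound the number of missed nonzero residues directly by a double counting argument whose \emph{diagonal} main term already produces the exact constant $\alpha^{-3}(1-\alpha)^2$. Write $\overline{A}=\F_p\smallsetminus A$ and set $M=(\F_p\smallsetminus\{0\})\smallsetminus A(A+A)$, so that the assertion is equivalent to $\abs{M}\le \alpha^{-3}(1-\alpha)^2+o(1)$. The starting observation is that $t\in M$ if and only if $t/a\notin A+A$ for every $a\in A$, i.e. $t/a-A\subseteq\overline{A}$ for every $a\in A$; in particular, for all $a,b,c\in A$ one has simultaneously $t/a-b\in\overline{A}$ and $t/a-c\in\overline{A}$.

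The key step I would carry out is the following. Fix $t\in M$ and consider the map $\phi_t\colon A^3\to\overline{A}\times\overline{A}$, $\phi_t(a,b,c)=(t/a-b,\,t/a-c)$, which is well defined by the observation above. Since $\abs{A}^3$ triples land in a set of size $\abs{\overline{A}}^2$, Cauchy--Schwarz gives $\sum_{(u,v)}\abs{\phi_t^{-1}(u,v)}^2\ge \abs{A}^6/\abs{\overline{A}}^2$. A collision $\phi_t(a,b,c)=\phi_t(a',b',c')$ means exactly $t(1/a-1/a')=b-b'=c-c'$, so summing over $t\in M$ and comparing with the count over all $t\in\F_p$ yields
\[
\abs{M}\,\frac{\abs{A}^6}{\abs{\overline{A}}^2}\ \le\ \sum_{t\in M}\sum_{(u,v)}\abs{\phi_t^{-1}(u,v)}^2\ \le\ \underbrace{\abs{A}^3\,p}_{a=a'}\ +\ \underbrace{\abs{A}(\abs{A}-1)\,E(A)}_{a\neq a'},
\]
where $E(A)=\#\{(x_1,x_2,x_3,x_4)\in A^4:\ x_1-x_2=x_3-x_4\}$ is the additive energy of $A$ (the off-diagonal count factorises because, for fixed $a\neq a'$, the dilation $t\mapsto t(1/a-1/a')$ is a bijection of $\F_p$). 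Retaining only the diagonal term $\abs{A}^3p$ and inserting $\abs{A}=\alpha p$, $\abs{\overline{A}}=(1-\alpha)p$ gives precisely $\abs{M}\le p\,\abs{\overline{A}}^2/\abs{A}^3=\alpha^{-3}(1-\alpha)^2$.

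The hard part will be the off-diagonal term $\abs{A}^2E(A)$: estimated over all $t\in\F_p$ it is of larger order than the main term (already for additively structured $A$), so bounding $\sum_{t\in M}$ by $\sum_{t\in\F_p}$ is too wasteful and one must genuinely use the restriction $t\in M$. The point I would push is that for $t\in M$ the relevant differences $t/a-t/a'=s-s'$ have $s=t/a,\ s'=t/a'$ lying in $(A+A)^{c}$, hence range over $(A+A)^{c}-(A+A)^{c}$, a set that is both small (since $\abs{(A+A)^{c}}\le(1-2\alpha)p+o(p)$ by Cauchy--Davenport) and multiplicatively spread along the dilates $tA^{-1}$; quantifying this should give $\sum_{t\in M}\abs{A}(\abs{A}-1)E(\cdot)=o(\abs{A}^3p)$, to be absorbed into the $o(1)$. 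Equivalently, one may run the same estimate as a second moment for $R(t)=\#\{(a,b,c)\in A^3:a(b+c)=t\}$, using $\sum_{t\ne0}R(t)=\abs{A}^3(1+o(1))$ and $\abs{M}\le (p-1)-\bigl(\sum_{t\ne0}R(t)\bigr)^2/\sum_{t\ne0}R(t)^2$; in that language the obstacle is the identical energy $\sum_{t\ne0}R(t)^2$, to be evaluated via additive characters, and the whole difficulty is to show that its excess over the flat term $\bigl(\sum_{t\ne0}R(t)\bigr)^2/(p-1)$ does not exceed $\abs{A}^3\abs{\overline{A}}^2/p$, which is exactly what pins down the constant.
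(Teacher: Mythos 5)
There is a genuine gap here, and you have located it yourself: the off-diagonal collision term is never estimated, and estimating it is the entire content of the theorem. Your diagonal term does reproduce the constant $\alpha^{-3}(1-\alpha)^2$, but this remains formal as long as the term $\abs{A}(\abs{A}-1)E(A)$, or its restriction to $t\in M$, is uncontrolled. Summed over all $t\in\F_p$ it is at least of order $\abs{A}^6/p\gg\abs{A}^3p$ (even for $A$ with minimal energy), so dropping the restriction $t\in M$ is always fatal, as you note; but per-$t$ bounds cannot rescue the argument either, since for a single $t$ the off-diagonal count $\sum_{a\neq a'}\abs{A\cap(A+t({a}^{-1}-{a'}^{-1}))}^2$ can be as large as $\Theta(\abs{A}^4)$ (take $A$ an interval and $t$ for which many of the dilated differences are small), and then the resulting inequality $\abs{M}\,\abs{A}^6/\abs{\overline{A}}^2\le\abs{M}\bigl(\abs{A}^3+\Theta(\abs{A}^4)\bigr)$ holds for every value of $\abs{M}$ and yields nothing. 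So one needs cancellation on average over $t\in M$, and the heuristics you offer do not supply it: $\abs{(A+A)^c}\approx(1-2\alpha)p$ is a positive proportion of $\F_p$ for every $\alpha<1/2$, so that set is not small; and the containment $tA^{-1}\subseteq(A+A)^c$ is precisely the information already spent to produce the main term, so reusing it for the error term requires a genuinely new mechanism, which ``multiplicatively spread along the dilates $tA^{-1}$'' does not quantify.

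For comparison, the paper sidesteps the diagonal/off-diagonal decomposition entirely and proves exactly the inequality your last sentence identifies as the crux. It writes $N=\sum_{t\neq0}R(t)^2=\frac{1}{p-1}\sum_{\chi}\bigl|\sum_{y,z\in A}\chi(y+z)\sum_{x\in A}\chi(x)\bigr|^2$ using \emph{multiplicative} characters (the natural frame here, since $R$ is a multiplicative convolution in $t$; the additive characters you suggest lead back to additive-energy terms of the same problematic size), isolates $\chi_0$, and uses two inputs: the refined bilinear bound $\bigl|\sum_{(y,z)\in A\times B}\chi(y+z)\bigr|\le(\abs{A}\abs{B}p)^{1/2}(1-\abs{B}/p)^{1/2}$ of Lemma \ref{characterSums}, and Parseval \eqref{eqpars}, which gives $\frac{1}{p-1}\sum_{\chi\neq\chi_0}\bigl|\sum_{x\in A}\chi(x)\bigr|^2=\abs{A}-\frac{\abs{A}^2}{p-1}$. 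Each input contributes one factor of $1-\alpha$, yielding $N\le\frac{\abs{A}^6}{p-1}\bigl(1+p^{-1}\alpha^{-3}(1-\alpha)^2\bigr)$, after which Cauchy--Schwarz finishes as in your reformulation. Your write-up stops exactly where this lemma would be needed; as it stands, the proposal is a correct reduction plus an unproven (and, via the routes you sketch, unprovable) key estimate, not a proof.
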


We note that similar results were obtained \cite{HH} for the set
$AA+A$. However, the constant $(1+c_0)^{-1}$
is replaced by the larger $1/3$ in Theorem \ref{Hennecart},
and the term $\alpha^{-3}(1-\alpha)^2$ is replaced by the larger
$\alpha^{-3}$. Further, the slightly weaker bound
$\abs{A(A+A)}\geq p-\alpha^{-3}$ may be extracted from 
\cite{sarko}.

In the opposite direction, we
have the following result. 
\begin{theorem}
\label{notquite}
There exists $A\subseteq \F_p\smallsetminus\{0\}$
such that $\abs{A}>(1/8+o(1))p$ and $A(A+A)\subsetneq \F_p\smallsetminus\{0\}$
for any large prime $p$.
Besides, for any $\epsilon >0$
there exists a set of size  $O(p^{3/4+\epsilon})$ such that
$A(A+A)$ misses $\Omega(p^{1/4-\epsilon})$ elements.
\end{theorem}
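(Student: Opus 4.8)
The plan is to deduce both assertions from one construction, built on the elementary reformulation that, for $\lambda \neq 0$,
$$\lambda \in A(A+A) \iff (\lambda A^{-1}) \cap (A+A) \neq \emptyset, \qquad A^{-1} := \{a^{-1} : a \in A\},$$
since $a(b+c)=\lambda$ is the same as $b+c = \lambda a^{-1}$. Thus $\lambda$ is missed by $A(A+A)$ precisely when the dilate $\lambda A^{-1}$ avoids the sumset $A+A$. I would engineer $A$ so that $A+A$ lies in a short initial interval while $A^{-1}$ lies in a short interval at the top of $\F_p$; multiplying the latter by a small positive integer $\lambda$ then keeps it near $p$, hence away from $A+A$.

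Concretely, fix $\beta \in (0,1/2)$ and $\eta \in (0,1)$ and set
$$A = \{x \in \Z : 1 \le x \le \beta p, \ x^{-1} \bmod p \in (p-\eta p,\,p)\},$$
identifying $x$ with its residue. Two deterministic containments drive everything. First, $A \subseteq (0,\beta p)$ forces $A+A \subseteq (0,2\beta p)$ with no wraparound since $2\beta<1$. Second, writing each $w \in A^{-1}$ as $w \equiv -\theta$ with $0<\theta<\eta p$, for every positive integer $\lambda < 1/\eta$ one has $\lambda w \equiv -\lambda\theta$ with $0<\lambda\theta<p$, so $\lambda A^{-1} \subseteq (p-\lambda\eta p,\,p)$. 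Hence as soon as $2\beta + \lambda\eta \le 1$ the two sets are disjoint and $\lambda \notin A(A+A)$; in particular every $\lambda \in \{1,2,\ldots,\lfloor (1-2\beta)/\eta\rfloor\}$ is missed.

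It remains to count $\abs{A}$, and this is the one genuinely analytic step: I would invoke the equidistribution of the inversion map $x \mapsto x^{-1}$ on the interval $[1,\beta p]$, i.e. the Weil bound for incomplete Kloosterman sums, to obtain
$$\abs{A} = \beta\eta\,p + O(\sqrt p\,\log^2 p).$$
For the first assertion I take $\beta,\eta$ constant; the missed element $\lambda=1$ requires $2\beta+\eta \le 1$, and maximising $\beta\eta$ under this constraint gives $\beta=1/4$, $\eta=1/2$, whence $\abs{A} = (1/8+o(1))p$ and $1\notin A(A+A)$. For the second assertion I keep $\beta=1/4$ and let $\eta = p^{-1/4+\epsilon}\to 0$: the error $O(\sqrt p\,\log^2 p)$ is then negligible against $\beta\eta p = \tfrac14 p^{3/4+\epsilon}$, so $\abs{A}=O(p^{3/4+\epsilon})$, while the block $\{1,\ldots,\lfloor (1-2\beta)/\eta\rfloor\}$ of missed elements has size $\Omega(\eta^{-1}) = \Omega(p^{1/4-\epsilon})$.

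The main obstacle is this cardinality estimate: the whole point is that the congruence condition ``$x^{-1}$ lies in a window of relative length $\eta$'' really cuts the interval $[1,\beta p]$ down by a factor $\eta$, which demands a nontrivial exponential-sum input with an error term small enough to survive even when $\eta$ is a negative power of $p$. Everything else — the two interval containments and the resulting disjointness — is elementary once one checks there is no wraparound, which is exactly why the scalars $\lambda$ must be honest small positive integers rather than arbitrary field elements.
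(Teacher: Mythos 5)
Your construction is correct, but it takes a genuinely different route from the paper's. The paper starts from the full interval $P=\{1,\ldots,\lceil cp\rceil\}$ and \emph{removes} the obstruction, setting $A=P\smallsetminus(P+P)^{-1}$, so that $A\cap(A+A)^{-1}=\varnothing$ and $1\notin A(A+A)$; its analytic input is the same as yours, namely the incomplete Kloosterman sum estimate (Lemmas~\ref{FourierAP} and \ref{lmKloosterman}) giving $|P\cap(P+P)^{-1}|\le 2c^2p+O(\sqrt{p}(\log p)^2)$, and optimizing $c-2c^2$ at $c=1/4$ produces the same density $1/8$. You instead \emph{intersect}: $A=\{x\in[1,\beta p]: x^{-1}\in(p-\eta p,\,p)\}$, so that $A+A$ and all the dilates $\lambda A^{-1}$ with $1\le\lambda\le(1-2\beta)/\eta$ lie in disjoint intervals, and maximizing $\beta\eta$ under $2\beta+\eta\le1$ again yields $1/8$ (an amusing coincidence of two different quadratic optimizations). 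The difference is most visible in the second assertion: the paper must delete $k$ dilates $x(P+P)^{-1}$, one per missed element, so its cardinality loss and error term grow linearly in $k$ (size $cp-2kc^2p-O(k\sqrt{p}(\log p)^2)$, with the compensating feature that the $k$ missed elements can be prescribed arbitrarily), whereas your single set misses the entire block $\{1,\ldots,\lfloor(1-2\beta)/\eta\rfloor\}$ at once with an error term independent of the number of missed elements; taking $\eta=p^{-1/4}(\log p)^{3/2}$ your construction even recovers the paper's ``slightly stronger'' log-power version, a set of size $\Omega(p^{3/4}(\log p)^{3/2})$ missing $\Omega(p^{1/4}(\log p)^{-3/2})$ elements. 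The one step you cite rather than prove, $|A|=\beta\eta p+O(\sqrt{p}\log^2 p)$, is exactly the computation in the paper's Lemma~\ref{subRandom}: complete the sum over additive characters and combine $\sum_t|\widehat{1_I}(t)|\ll p\log p$ with $|\widehat{1_{J^{-1}}}(t)|\ll\sqrt{p}\log p$; since your window $J=(p-\eta p,p)$ avoids $0$, this applies verbatim and, as you correctly stress, the $O(\sqrt{p}\log^2 p)$ error survives even when $\eta$ is a negative power of $p$. Your wraparound checks (which also guarantee $0\notin A(A+A)$, needed for $A(A+A)\subsetneq\F_p\smallsetminus\{0\}$) are sound, so the argument is complete.
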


\section{Proof of Theorem \ref{Hennecart}}
In this section, we shall need the Cauchy-Davenport Theorem, which we
now state.
See for instance \cite[Theorem 2.2]{Nath} for a proof.
\begin{lm}
\label{CauchyDavenport}
Let $A$ and $B$ be subsets of $\F_p$. Then $\abs{A+B}\geq
 \min(\abs{A}+\abs{B}-1,p)$.
\end{lm}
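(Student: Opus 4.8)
The plan is to prove the bound by a minimal-counterexample argument based on Dyson's $e$-transform, which keeps the proof elementary and self-contained. First I would dispose of the easy regime: if $|A| + |B| > p$, then for every $c \in \F_p$ the two sets $A$ and $c - B$ have sizes summing to more than $p$ and must therefore intersect, so $c \in A + B$; hence $A + B = \F_p$ and the bound holds with value $p$. It then remains to treat the case $|A| + |B| \le p$, where the target is $|A + B| \ge |A| + |B| - 1$, and where we may assume both $A$ and $B$ nonempty.

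For this main case I would argue by contradiction, choosing among all pairs $(A, B)$ with $|A| + |B| \le p$ that violate the inequality one for which $|A|$ is smallest. The case $|A| = 1$ is immediate, since then $A + B$ is a translate of $B$ and $|A+B| = |B| = |A| + |B| - 1$; so the minimal counterexample has $|A| \ge 2$. The key device is the $e$-transform: for $e \in \F_p$ set $A_e = A \cap (e + B)$ and $B_e = B \cup (A - e)$. A short inclusion--exclusion computation, using $B \cap (A - e) = A_e - e$, gives $|A_e| + |B_e| = |A| + |B|$, while the inclusion $A_e - e \subseteq B$ yields $A_e + B_e \subseteq A + B$. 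Thus $(A_e, B_e)$ has the same size-sum as $(A,B)$ and a sumset no larger, so it is again a counterexample whenever $0 < |A_e| < |A|$.

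The main obstacle --- and the only place where the primality of $p$ is used --- is to show that when $|A| \ge 2$ one can always choose $e$ with $0 < |A_e| < |A|$, which contradicts the minimality of $|A|$. Suppose no such $e$ exists, so that $A \cap (e + B) \in \{\emptyset, A\}$ for every $e$. Taking $e = a - b$ for $a \in A$, $b \in B$ shows $a \in A \cap (e+B)$, whence $A \subseteq e + B = a - b + B$; fixing $a_0 \in A$ and writing $A' = A - a_0$, I would deduce $A' + B \subseteq B$. Since $0 \in A'$ this forces $x + B = B$ for every $x \in A'$, so $A'$ lies in the stabilizer $H = \{x \in \F_p : x + B = B\}$, a subgroup of $(\F_p,+)$. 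As $|A'| = |A| \ge 2$ the subgroup $H$ is nontrivial, and because $p$ is prime this forces $H = \F_p$ and hence $B = \F_p$, contradicting $|B| \le p - |A| < p$. This completes the reduction and the proof.

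An alternative I would keep in reserve is the polynomial method: assuming $|A + B| \le |A| + |B| - 2 < p$, one applies the Combinatorial Nullstellensatz to $f(x,y) = \prod_{c}(x + y - c)$, where $c$ ranges over a set of size $|A| + |B| - 2$ containing $A + B$. The coefficient of $x^{|A|-1} y^{|B|-1}$ in $f$ equals $\binom{|A|+|B|-2}{|A|-1}$, which is nonzero in $\F_p$ precisely because $|A| + |B| - 2 < p$, so there is a point $(a,b) \in A \times B$ with $f(a,b) \ne 0$ --- impossible, since $a + b$ is one of the $c$'s. This route again isolates the role of primality in the nonvanishing of the binomial coefficient modulo $p$.
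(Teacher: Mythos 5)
Your proof is correct. Note that the paper itself offers no proof of this lemma: it simply cites Nathanson \cite[Theorem 2.2]{Nath} (and separately records the trivial observation that $\abs{A}+\abs{B}>p$ forces $A+B=\F_p$, which is also your easy regime). Your $e$-transform argument is the classical one --- indeed essentially the proof given in the cited reference --- and all the steps check out: the identity $\abs{A_e}+\abs{B_e}=\abs{A}+\abs{B}$ via $B\cap(A-e)=A_e-e$, the inclusion $A_e+B_e\subseteq A+B$, and the stabilizer argument, which correctly isolates primality as the statement that $(\F_p,+)$ has no proper nontrivial subgroups. The Combinatorial Nullstellensatz alternative (the Alon--Nathanson--Ruzsa route) is also valid, with primality entering through the nonvanishing of $\binom{\abs{A}+\abs{B}-2}{\abs{A}-1}$ modulo $p$ when $\abs{A}+\abs{B}-2<p$. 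One pedantic remark: as literally stated the lemma fails for $A=\varnothing$, $\abs{B}\geq 2$ (then $A+B=\varnothing$), so the nonemptiness you ``assume'' is in fact a needed hypothesis that the paper leaves implicit; your handling is consistent with the standard convention and is not a gap in your argument.
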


In particular, if $\abs{A}+\abs{B}>p$, then $A+B=\F_p$, which is also
obvious because $A$ and $x-B$ cannnot be disjoint for any $x$.

First, we note that if $\alpha >1/2$, then $\abs{A+A}\geq \abs{A}>p/2$ so that
$A(A+A)=\F_p$. But as soon $\alpha <1/2$, we can easily
have $A(A+A)\subsetneq \F_p^*$, for instance
by taking $A=\{1,\ldots,\lfloor \frac{p-1}{2}\rfloor\}$.

Here is another almost equally immediate corollary.
\begin{cor}\label{cor2}
Let $A\subseteq \mathbb{F}_p\smallsetminus\{0\}$ satisfy $|A|>(p+1)/3$. Then either
$A(A+A)=\mathbb{F}_p$ or $\mathbb{F}_p\smallsetminus\{0\}$.
\end{cor}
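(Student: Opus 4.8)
The plan is to reduce the statement to a single pigeonhole argument built on the Cauchy--Davenport bound. First I would fix an arbitrary nonzero $x\in\mathbb{F}_p$ and rewrite the membership condition: one has $x\in A(A+A)$ precisely when there exists $a\in A$ with $xa^{-1}\in A+A$. Introducing the dilated set $xA^{-1}:=\{xa^{-1}:a\in A\}$, this says exactly that $xA^{-1}$ meets $A+A$. Since $x\neq 0$, the map $a\mapsto xa^{-1}$ is a bijection of $A$ onto $xA^{-1}$, so $|xA^{-1}|=|A|$.

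Next I would compare cardinalities. By Lemma \ref{CauchyDavenport} we have $|A+A|\geq\min(2|A|-1,p)$. If this minimum is $p$, then $A+A=\mathbb{F}_p$, so $xA^{-1}\subseteq A+A$ trivially and $x\in A(A+A)$. Otherwise $|A+A|\geq 2|A|-1$, and then
$$|xA^{-1}|+|A+A|\geq |A|+(2|A|-1)=3|A|-1>p,$$
where the final inequality uses the hypothesis $|A|>(p+1)/3$, equivalently $3|A|-1>p$. Two subsets of $\mathbb{F}_p$ whose sizes sum to strictly more than $p$ cannot be disjoint, so $xA^{-1}\cap(A+A)\neq\emptyset$ and again $x\in A(A+A)$. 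As $x$ ranged over all nonzero elements, this establishes $\mathbb{F}_p\smallsetminus\{0\}\subseteq A(A+A)$.

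Finally I would determine whether $0$ lies in $A(A+A)$. Because $A\subseteq\mathbb{F}_p\smallsetminus\{0\}$, every factor $a$ in a product $a(b+c)$ is nonzero, so $0\in A(A+A)$ holds if and only if $b+c=0$ for some $b,c\in A$, that is, if and only if $A\cap(-A)\neq\emptyset$. Combined with the inclusion just proved, this shows that $A(A+A)=\mathbb{F}_p$ when $A$ meets $-A$ and $A(A+A)=\mathbb{F}_p\smallsetminus\{0\}$ otherwise, so in either case $A(A+A)$ is one of the two asserted sets. There is no genuine obstacle here: the argument is a direct pigeonhole, and the only points requiring a little care are the split into the two cases ($A+A=\mathbb{F}_p$ versus $A+A\subsetneq\mathbb{F}_p$) and the separate bookkeeping of the element $0$, both of which are routine.
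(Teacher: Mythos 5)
Your proof is correct and follows essentially the same route as the paper: Cauchy--Davenport gives $|A+A|\geq 2|A|-1$, and then a pigeonhole on cardinalities forces a dilated inverse set to meet the other set, for every nonzero $x$. The only (immaterial) difference is that you intersect $xA^{-1}$ with $A+A$ inside $\F_p$, while the paper intersects $xB^{-1}$ with $A$ inside $\F_p\smallsetminus\{0\}$ for $B=(A+A)\smallsetminus\{0\}$; your closing observation that $0\in A(A+A)$ exactly when $A\cap(-A)\neq\varnothing$ is a harmless refinement of the stated dichotomy.
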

\begin{proof}
Let $B=(A+A)\smallsetminus\{0\}$.
Using Theorem \ref{CauchyDavenport}, we have
 $|A+A|>(2p-1)/3$ so $|B|> (2p-4)/3$, whence
$|A|+|B|>p-1$. We infer that for any $x\in\mathbb{F}_p\smallsetminus\{0\}$ we have
$$
xB^{-1}\cap A\ne\varnothing,
$$
which yields $AB=\mathbb{F}_p\smallsetminus\{0\}$.
\end{proof}

We now prove Theorem \ref{Hennecart}, which reveals that we
can lower the density requirement from $1/3$ to $0.3051$
while maintaining 
$A(A+A)\supset \F_p\smallsetminus\{0\}$.

\medskip
To start with, we recall the famous Freiman's $3k-4$ Theorem  for the integers,
which gives precise structural information on a set which has quite
small, but not necessarily minimal,  doubling \cite[Theorem 1.16]{Nath}.

\begin{prop}\label{propAR} If $A\subset \mathbb{Z}$ satisfies $|A+A|\le 3|A|-4$ then
$A$ is contained in an 
  arithmetic progression of length at most $ |A+A|-|A|+1$.
\end{prop}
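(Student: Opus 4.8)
This is the classical Freiman $3k-4$ theorem, so the plan is to give a self-contained inductive proof. First I would normalise: after a translation we may write $A=\{0=a_0<a_1<\cdots<a_{k-1}=n\}$ with $k=\abs{A}$ and $n=\max A$. If $d=\gcd(a_1,\dots,a_{k-1})>1$, I replace $A$ by $A/d$; this preserves both $k$ and $\abs{A+A}$, divides $n$ by $d$, and a bound for $A/d$ (whose shortest enclosing progression has difference $1$) transfers back to $A$ via an enclosing progression of difference $d$. Thus I may assume $\gcd=1$, so that the shortest progression containing $A$ is $\{0,1,\dots,n\}$, of length $n+1$. The asserted conclusion then reads $n+1\le \abs{A+A}-k+1$, i.e.
\[
\abs{A+A}\ge n+k .
\]

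The heart of the matter is the sumset inequality
\[
\abs{A+A}\ \ge\ \min(n+k,\,3k-3)
\]
valid for every normalised $A$ with $\gcd=1$. Granting it, the hypothesis $\abs{A+A}\le 3k-4$ does the rest: since $\min(n+k,3k-3)\le\abs{A+A}\le 3k-4<3k-3$, the minimum cannot equal $3k-3$, so it equals $n+k$, whence $\abs{A+A}\ge n+k$ as required. So the whole theorem reduces to this one inequality, which I would prove by induction on $k$, the base cases $k\le 3$ being settled by direct inspection of the at most six or nine possible sums.

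For the inductive step I remove the top point: put $A'=A\setminus\{n\}$, with largest element $n'=a_{k-2}$, so $\abs{A'}=k-1$. The two sums $2n$ and $n+n'$ both exceed $\max(A'+A')=2n'$, hence are new, giving $\abs{A+A}\ge\abs{A'+A'}+2$; more precisely, if $m$ counts the indices $j$ with $a_j>2n'-n$ (equivalently, the top sums $n+a_j$ lying above $2n'$), then $\abs{A+A}\ge\abs{A'+A'}+m$. When $\gcd(A')=1$ the inductive hypothesis yields $\abs{A'+A'}\ge\min(n'+k-1,\,3k-6)$, and adding $m$ should recover $\min(n+k,3k-3)$; when $\gcd(A')=d'>1$ I first rescale $A'$ by $d'$ and then invoke induction, the loss in diameter being compensated by the fact that $\gcd(A)=1$ forces the top sums $n+a_j$ to be well spread.

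The delicate point, and the main obstacle, is exactly the count $m$. In the regime where $n'$ is large (so the inductive bound is the flat $3k-6$) a bare two-sum gain only delivers $3k-4$, not $3k-3$; one genuinely needs a \emph{third} new top sum, i.e. $m\ge 3$, which holds precisely when the top gap $a_{k-1}-a_{k-2}$ strictly exceeds the preceding gap $a_{k-2}-a_{k-3}$. To secure this I would first apply the reflection $A\mapsto n-A$, which preserves all the hypotheses, so as to place a largest gap at the top, and then dispose of the residual equality cases (tied largest gaps) and the $\gcd(A')>1$ branch by running the same removal argument at the bottom end or after rescaling. Keeping all these boundary configurations aligned with the sharp value $3k-3$, rather than the weaker $3k-4$ that the naive count produces, is the crux of the entire argument.
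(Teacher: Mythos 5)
The paper itself offers no proof of this proposition: it is Freiman's classical $3k-4$ theorem, imported verbatim from \cite[Theorem 1.16]{Nath}, so your attempt must be judged on its own terms rather than against an argument in the text. Your reduction is correct and standard: after translating and dilating so that $A=\{0=a_0<a_1<\cdots<a_{k-1}=n\}$ with $\gcd(a_1,\dots,a_{k-1})=1$, the conclusion is equivalent to $\abs{A+A}\ge n+k$, and this does follow from the inequality $\abs{A+A}\ge\min(n+k,\,3k-3)$ combined with the hypothesis $\abs{A+A}\le 3k-4$. Your accounting of the new top sums is also right: $m\ge 2$ always, and $m\ge 3$ exactly when $a_{k-1}-a_{k-2}>a_{k-2}-a_{k-3}$.

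The genuine gap is exactly at the point you yourself call the crux, and your proposed remedy cannot close it. Reflection $A\mapsto n-A$ merely reverses the gap sequence, so it cannot move an \emph{interior} largest gap to either end; consequently there are configurations where neither the top nor the bottom removal yields three new sums. Concretely, take $A=\{0,1,6,7\}$, with gap sequence $(1,5,1)$: here $k=4$, $\gcd=1$, and $\abs{A+A}=\abs{\{0,1,2,6,7,8,12,13,14\}}=9=3k-3$, but $A'=\{0,1,6\}$ satisfies $\abs{A'+A'}=6=3k'-3$ (the inductive hypothesis is tight) while only $m=2$ of the sums $n+a_j$ exceed $2n'=12$, so your inductive step delivers only $8$; the set is symmetric, so removing the bottom element fails identically, and $\gcd(A')=1$, so the rescaling branch does not apply either. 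The $\gcd(A')>1$ branch is likewise left as an assertion (``the loss in diameter being compensated \dots'') without any mechanism. These balanced and degenerate configurations are precisely where the classical proofs do their real work --- Freiman's original argument as reproduced in \cite[Theorem 1.16]{Nath}, or the Lev--Smeliansky route deriving the inequality from Kneser's theorem by projecting $A$ modulo $n$ --- so as it stands your proposal is an accurate reduction plus a correct bookkeeping framework, but with the decisive step of the induction missing.
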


An analogue of this proposition has been developed in $\F_p$, and
it is known as the \emph{Freiman 2.4-theorem}. 
A useful lemma in \cite{Frei} (see also \cite[Theorem 2.9]{Nath})  was derived in
the proof thereof,
and we will need it here. We also include an improvement due to Lev. 
\\
We first define the Fourier transform of a function $f : \F_p\longrightarrow\C$
by 
$$
\widehat{f}(t)=\sum_{x\in\F_p}f(x)e_p(tx)
$$ 
for any $t\in\F_p$ where $e_p(x)=\exp(2i\pi x/p)$. The Parseval identity is the
following
\begin{equation}
\label{eq:ParsAdditive}
\sum_{x\in \mathbb{F}_p}f(x)\overline{g(x)}
=\frac1p\sum_{h\in \mathbb{F}_p}\widehat{f}(h)\overline{\widehat{g}(h)}.
\end{equation}

The characteristic function of a subset $A$ of $\mathbb{F}_p$ is denoted by $1_A$
and for $r\in \mathbb{F}_p$ we let $rA=\{ra,\ a\in A\}$. 

\begin{lm}
\label{Posnikova}
Let $A\subseteq \F_p$ with $|A|=\alpha p$ and $0<\gamma <1 $ satisfy
$\abs{\widehat{1_A}(r)}\geq \gamma\abs{A}$ for some $r\in\F_p\smallsetminus\{0\}$. 
Then there exists an interval modulo $p$ of length at most $p/2$ 
that contains at least $\alpha_1 p$ elements of 
$rA$ where $\alpha_1$ can be freely chosen as 
\begin{enumerate}
\item[i)]\label{Freiman} $\alpha_1 =\dfrac{(1+\gamma)\alpha}2$, \quad cf. \cite{Frei},

\item[ii)]\label{Lev} or $
\alpha_1=\dfrac{\alpha}2+\dfrac1{2\pi}\arcsin\left(
{\pi\gamma\alpha}
\right)$, \quad cf. \cite{Lev}.

\end{enumerate}
\end{lm}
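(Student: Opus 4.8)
The plan is to prove the whole statement by transferring the hypothesis into a concentration statement for the points $\theta_b=2\pi b/p$ ($b\in rA$) on the circle $\mathbb R/2\pi\mathbb Z$, deriving (i) from an exact integral identity and obtaining (ii) by invoking Lev's sharpening. First I would normalise. Since $\widehat{1_A}(r)=\sum_{b\in rA}e_p(b)$, I set $B=rA$ (so $|B|=|A|=N$, and intervals for $B$ are intervals), and after multiplying by a unimodular constant — a rigid rotation of the circle, which does not affect the number of points in any arc — I may assume the coefficient is real and nonnegative, so that
\[
\sum_{b\in B}\cos\theta_b=\bigl|\widehat{1_A}(r)\bigr|\ge\gamma N .
\]
For $s\in[0,2\pi)$ let $N(s)=\#\{b\in B:\theta_b\in[s,s+\pi)\bmod 2\pi\}$, the number of points in the semicircle starting at $s$; this is exactly the number of elements of $rA$ in an interval modulo $p$ of length $p/2$, so with $M=\max_s N(s)$ the entire lemma reduces to showing $M\ge\alpha_1 p$.

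The engine for (i) is the pointwise identity
\[
\cos\theta=\int_0^{2\pi}\mathbf 1\bigl[\theta\in[s,s+\pi)\bigr]\Bigl(-\tfrac12\sin s\Bigr)\,ds ,
\]
verified by integrating the fixed kernel $-\tfrac12\sin s$ over the range $s\in(\theta-\pi,\theta]$. Summing over $b\in B$ replaces the left side by $\sum_b\cos\theta_b$ and the right side by $\int_0^{2\pi}(-\tfrac12\sin s)N(s)\,ds$. I would then split at $s=\pi$, substitute $s\mapsto s+\pi$ on the upper half, and use the exact partition identity $N(s)+N(s+\pi)=N$ (the two half-open semicircles tile the circle); the right-hand side collapses to $N-\int_0^\pi\sin s\,N(s)\,ds$. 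Combined with the displayed lower bound this gives
\[
\int_0^\pi\sin s\,N(s)\,ds\le(1-\gamma)N .
\]
Inserting the elementary bound $N(s)\ge N-M$ (again from $N(s)+N(s+\pi)=N$) together with $\int_0^\pi\sin s\,ds=2$ yields $2(N-M)\le(1-\gamma)N$, that is $M\ge\tfrac{1+\gamma}{2}N=\tfrac{(1+\gamma)\alpha}{2}p$, which is exactly (i).

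For (ii) I would follow Lev. The step above is wasteful because it replaces the profile $N(s)$ by the single constant $N-M$, discarding the fact that $s\mapsto N(s)$ is an arc-counting function. The bound is sharp for an arc of consecutive residues, which suggests treating the arc as the extremal configuration: I would set up the problem of maximising $\sum_b\cos\theta_b$ over configurations with $M$ fixed, argue by a rearrangement (compression towards the peak of $\cos$) that an arc-type configuration is optimal, and then compute, using that the coefficient of an arc of density $\beta$ has modulus $\tfrac p\pi\sin(\pi\beta)+o(p)$; inverting the resulting relation produces the term $\tfrac1{2\pi}\arcsin(\pi\gamma\alpha)$. The consistency check that this is the right constant is that on the extremal arc of density $\alpha\le 1/2$ one has $\gamma=\sin(\pi\alpha)/(\pi\alpha)$, whence $\pi\gamma\alpha=\sin(\pi\alpha)$, $\arcsin(\pi\gamma\alpha)=\pi\alpha$, and the formula returns $\alpha_1=\alpha=M/p$.

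The hard part is the extremality in (ii): one must control the densest-semicircle count over all semicircles simultaneously, not merely the one centred at the peak of $\cos$, and a naive double-arc guess already fails this test because a shifted semicircle can capture extra mass. Establishing that no spread-out configuration with the same $\gamma$ beats the arc is precisely the content of Lev's argument, which I would cite rather than reprove — the integral inequality from (i) on its own returns only Freiman's constant. The remaining bookkeeping is routine: the rigid rotation need not be a residue shift, the half-open semicircles avoid any boundary ambiguity, and the passage from an arc of angular length $\pi$ to an interval modulo $p$ of length at most $p/2$ costs only the $o(1)$ already present in the statement.
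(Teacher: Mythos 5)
The paper offers no proof of this lemma at all: it is stated as a quoted result, with part (i) attributed to Freiman (see also Theorem 2.9 of Nathanson's book, as the paper notes) and part (ii) to Lev. So the comparison here is with the cited literature rather than with any in-paper argument. Your treatment of (i) is a correct, self-contained proof, and it is in fact the standard argument behind Freiman's lemma: writing $\widehat{1_A}(r)=\sum_{b\in rA}e_p(b)$, rotating by a unimodular constant to make the sum real (which is legitimate, since you only ever count lattice points in arcs, so the rotation need not be a residue shift), and averaging the semicircle counts $N(s)$ against the kernel $-\tfrac12\sin s$. Your identity $\sum_b\cos\theta_b=N-\int_0^\pi\sin s\,N(s)\,ds$, the partition relation $N(s)+N(s+\pi)=N$, and the crude bound $N(s)\geq N-M$ give exactly $M\geq\tfrac{1+\gamma}{2}N$, which is (i). For (ii) you cite Lev, as the paper does; your consistency check (for an arc of density $\alpha\leq 1/2$ one has $\pi\gamma\alpha=\sin(\pi\alpha)$, so the formula returns $\alpha_1=\alpha$) correctly identifies the extremal configuration, and since $\arcsin x\geq x$ your observation that the integral inequality alone only recovers Freiman's constant is accurate. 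Deferring the genuinely harder extremal analysis of (ii) to the cited source is a reasonable choice given that the paper itself proves neither part.

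One small inaccuracy in the bookkeeping: a half-open semicircle cuts out an interval of up to $(p+1)/2$ consecutive residues, not $\lfloor p/2\rfloor$, so to meet the stated ``length at most $p/2$'' one must discard an endpoint, losing at most one element of $rA$. You attribute this to ``the $o(1)$ already present in the statement,'' but the lemma as stated contains no $o(1)$; the slack lives only in the applications (e.g.\ in \eqref{eqalpha1}), where a loss of $O(1)$ elements, i.e.\ $O(1/p)$ in density, is indeed harmless. This is a cosmetic point, not a gap in the argument.
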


There a few other basic results about Fourier transforms that we will
need in the sequel.
\begin{lm}
\label{FourierAP}
Let $P$ be an arithmetic progression in $\F_p$.
Then
$$
\sum_{r\in\F_p}\abs{\widehat{1_P}(r)}\ll p\log p.
$$
\end{lm}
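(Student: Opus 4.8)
The plan is to evaluate $\widehat{1_P}(r)$ explicitly as a geometric sum, reduce the estimate to a sum over a single variable, and then control a harmonic-type series coming from the standard lower bound on $\abs{\sin}$. Write $P=\{a+jd:0\le j\le N-1\}$ with $d\neq 0$ and $N=\abs{P}$ (the cases $N\le 1$ being trivial). Then
$$
\widehat{1_P}(r)=\sum_{j=0}^{N-1}e_p(r(a+jd))=e_p(ra)\sum_{j=0}^{N-1}e_p(rdj),
$$
so $\abs{\widehat{1_P}(r)}$ depends only on $rd$. Summing the geometric series gives, for $rd\not\equiv 0$,
$$
\abs{\widehat{1_P}(r)}=\abs{\frac{\sin(\pi Nrd/p)}{\sin(\pi rd/p)}}\le\frac1{\abs{\sin(\pi rd/p)}}.
$$

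Since $d\neq 0$, the map $r\mapsto rd$ is a bijection of $\F_p$; hence as $r$ runs over $\F_p\smallsetminus\{0\}$ the quantity $s:=rd$ runs over $\F_p\smallsetminus\{0\}$ as well. The term $r=0$ contributes $\abs{\widehat{1_P}(0)}=N\le p$. I would therefore bound
$$
\sum_{r\in\F_p}\abs{\widehat{1_P}(r)}\le p+\sum_{s=1}^{p-1}\frac1{\abs{\sin(\pi s/p)}}.
$$
Note that this substitution is exactly why the resulting bound is uniform over all progressions, irrespective of the common difference $d$.

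To finish, I would invoke the elementary inequality $\abs{\sin(\pi u)}\ge 2\lVert u\rVert$, where $\lVert u\rVert$ denotes the distance from $u$ to the nearest integer (this follows from $\sin(\pi t)\ge 2t$ on $[0,1/2]$). This yields $\abs{\sin(\pi s/p)}\ge \tfrac{2}{p}\min(s,p-s)$, whence
$$
\sum_{s=1}^{p-1}\frac1{\abs{\sin(\pi s/p)}}\le\frac p2\sum_{s=1}^{p-1}\frac1{\min(s,p-s)}\le p\sum_{1\le k\le p/2}\frac1k\ll p\log p,
$$
the last step being the standard harmonic sum estimate. Combining with the $r=0$ contribution gives the claim. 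I do not expect a genuine obstacle here: the argument is a direct computation, and the only points demanding mild care are the reduction $r\mapsto s=rd$ and the choice of the correct lower bound for $\abs{\sin}$ that converts the geometric-series estimate into a harmonic sum.
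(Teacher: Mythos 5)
Your proof is correct: the geometric-series evaluation of $\widehat{1_P}(r)$, the substitution $s=rd$ (legitimate since $d\neq 0$ in the field $\F_p$, so no nonzero $r$ gives $rd\equiv 0$), and the lower bound $\abs{\sin(\pi u)}\geq 2\lVert u\rVert$ reducing everything to a harmonic sum constitute exactly the standard argument for this classical estimate. The paper states the lemma without proof, treating it as well known, and your write-up supplies precisely the omitted verification, with the edge cases ($r=0$, $N\le 1$) handled correctly.
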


We now recall Weil's bound \cite{Weil} for Kloosterman sums.
\begin{lm}\label{lmCompleteKloosterman}
For any $(a,b)\neq (0,0)$, we have
$$
\bigg|\sum_{k\in \F_p\smallsetminus\{0\}}e_p(ak+bk^{-1})\bigg|\leq 2\sqrt{p}
$$
\end{lm}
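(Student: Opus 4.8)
The plan is to recognise this as Weil's celebrated bound and to reduce it, via the zeta function of an auxiliary curve, to the Riemann Hypothesis for curves over finite fields. First I would dispose of the degenerate cases: if exactly one of $a,b$ vanishes, say $b=0$ and $a\neq 0$, then $\sum_{k\in\F_p\smallsetminus\{0\}}e_p(ak)=-1$ because the complete sum over $\F_p$ vanishes, and $\abs{-1}\le 2\sqrt p$; the case $a=0$ is symmetric. When $a,b\neq 0$, the substitution $k\mapsto k/a$ sends $ak+bk^{-1}$ to $k+(ab)k^{-1}$, so the sum equals the normalised Kloosterman sum $K(n):=\sum_{k\in\F_p\smallsetminus\{0\}}e_p(k+nk^{-1})$ with $n=ab\neq 0$, and it suffices to prove $\abs{K(n)}\le 2\sqrt p$.

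Next I would pass from the single field $\F_p$ to the whole tower of extensions $\F_{p^m}$, since the strength of the bound is governed by the behaviour of the sum over all $\F_{p^m}$ at once. For each $m\ge 1$ set $K_m(n)=\sum_{x\in\F_{p^m}^{\times}}e_p\big(\mathrm{Tr}_{\F_{p^m}/\F_p}(x+nx^{-1})\big)$, so that $K_1(n)=K(n)$, and form the $L$-function $L(T)=\exp\big(\sum_{m\ge 1}K_m(n)T^m/m\big)$. I would interpret these quantities through the Artin--Schreier curve $C_n\colon z^p-z=x+nx^{-1}$, a cyclic degree-$p$ cover of the punctured line; counting its $\F_{p^m}$-points, using that over $\F_{p^m}$ the equation $z^p-z=c$ has $p$ solutions when $\mathrm{Tr}_{\F_{p^m}/\F_p}(c)=0$ and none otherwise, expresses the point counts in terms of the sums $K_m(tn)$ for $t\in\F_p^{\times}$ together with the trivial contribution. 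Correspondingly the zeta function of $C_n$ factors over the characters of the Galois group $\Z/p$, and the factor attached to our additive character is exactly $L(T)$; a genus computation via Riemann--Hurwitz for this cover, which is ramified only over $x=0$ and $x=\infty$ where $x+nx^{-1}$ has simple poles, shows $L(T)$ to be a polynomial of degree $2$, say $L(T)=(1-\alpha T)(1-\beta T)$. The trace formula then gives $K(n)=K_1(n)=-(\alpha+\beta)$, and the functional equation of $L$ forces $\alpha\beta=p$.

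The heart of the matter, and the step I expect to be the main obstacle, is the bound $\abs{\alpha}=\abs{\beta}=\sqrt p$ on the inverse roots: this is precisely the Riemann Hypothesis for the curve $C_n$, namely that the Frobenius eigenvalues on $H^1(C_n)$ have absolute value $\sqrt p$, and it is the deep geometric input of Weil's theorem for which there is no cheap substitute at this sharpness. Granting it, $\abs{K(n)}=\abs{\alpha+\beta}\le\abs{\alpha}+\abs{\beta}=2\sqrt p$, which is the claim. I should add that a fully self-contained route exists through Stepanov's elementary polynomial method, as refined by Schmidt and Bombieri, which avoids cohomology altogether by constructing an auxiliary polynomial vanishing to high order at the relevant points and recovers the same $2\sqrt p$ bound; but that method merely relocates all the difficulty into the control of that construction. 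Since the present note needs the inequality only as a black box, invoking Weil's original result \cite{Weil} is the most economical course, and I would simply cite it.
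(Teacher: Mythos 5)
Your proposal is correct and, in effect, coincides with the paper's treatment: the paper gives no proof of this lemma at all, simply invoking Weil's bound \cite{Weil} as a black box, which is exactly where your argument ends up. Your sketch of the underlying proof---the reduction of the degenerate cases, the rescaling $k\mapsto k/a$ to reach $K(ab)$, the Artin--Schreier cover $z^p-z=x+nx^{-1}$, the degree-$2$ $L$-factor via Riemann--Hurwitz, and the Riemann Hypothesis for curves giving $\abs{\alpha}=\abs{\beta}=\sqrt{p}$---is the standard and accurate route, with one cosmetic slip: the sums produced by the character $t\in\F_p^{\times}$ in the point count are $K_m(t^2n)$ rather than $K_m(tn)$ (substitute $x\mapsto x/t$), which affects neither the factor at $t=1$ nor the final bound.
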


We will also need a bound for so-called incomplete Kloosterman sums,
whose proof follows easily from the last two lemmas.
\begin{lm}\label{lmKloosterman}
Let $P\subseteq \F_p\smallsetminus\{0\}$ be an arithmetic progression.
Then for any $r\neq 0$ we have
$$
\Big|\widehat{1_{P^{-1}}}(r)\Big|\ll \sqrt{p}\log p.
$$
\end{lm}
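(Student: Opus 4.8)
The plan is to treat $\widehat{1_{P^{-1}}}(r)$ as an incomplete Kloosterman sum and to \emph{complete} it in the standard way, reducing it to a weighted average of complete Kloosterman sums to which Weil's bound (Lemma \ref{lmCompleteKloosterman}) can be applied. Since inversion is a bijection of $\F_p\smallsetminus\{0\}$ and $0\notin P$, I would first write
$$
\widehat{1_{P^{-1}}}(r)=\sum_{x\in P^{-1}}e_p(rx)=\sum_{y\in P}e_p(ry^{-1})=\sum_{y\in\F_p\smallsetminus\{0\}}1_P(y)\,e_p(ry^{-1}).
$$
Then I would insert the Fourier inversion formula $1_P(y)=\frac1p\sum_{h\in\F_p}\widehat{1_P}(h)e_p(-hy)$ and interchange the order of summation to obtain
$$
\widehat{1_{P^{-1}}}(r)=\frac1p\sum_{h\in\F_p}\widehat{1_P}(h)\sum_{y\in\F_p\smallsetminus\{0\}}e_p\big(-hy+ry^{-1}\big),
$$
where the inner sum is a complete Kloosterman sum with parameters $(a,b)=(-h,r)$.

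Because $r\neq 0$, the pair $(-h,r)$ is nonzero for \emph{every} $h\in\F_p$, so Lemma \ref{lmCompleteKloosterman} bounds each inner sum by $2\sqrt p$ uniformly. Factoring this out yields
$$
\Big|\widehat{1_{P^{-1}}}(r)\Big|\leq \frac{2\sqrt p}{p}\sum_{h\in\F_p}\big|\widehat{1_P}(h)\big|=\frac{2}{\sqrt p}\sum_{h\in\F_p}\big|\widehat{1_P}(h)\big|.
$$
It then suffices to control the $\ell^1$ norm of $\widehat{1_P}$, which is exactly the content of Lemma \ref{FourierAP}: one has $\sum_{h\in\F_p}|\widehat{1_P}(h)|\ll p\log p$. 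Combining the two bounds gives $|\widehat{1_{P^{-1}}}(r)|\ll \sqrt p\log p$, as claimed.

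I do not anticipate any genuine difficulty, as this is the classical completion argument for incomplete Kloosterman sums. The one place demanding attention is the nondegeneracy hypothesis of Weil's bound: Lemma \ref{lmCompleteKloosterman} requires $(a,b)\neq(0,0)$, and in the expansion above this must hold for \emph{all} frequencies $h$, including $h=0$. It is precisely the hypothesis $r\neq 0$ that guarantees $(-h,r)\neq(0,0)$ throughout, licensing the uniform estimate; without it the $h=0$ term would be a full character sum of size $p$ and the bound would break down.
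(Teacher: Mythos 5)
Your proof is correct and follows exactly the route the paper intends: the paper gives no separate proof of Lemma \ref{lmKloosterman} beyond noting that it ``follows easily'' from Lemmas \ref{FourierAP} and \ref{lmCompleteKloosterman}, and your completion argument combines precisely these two. Your attention to the nondegeneracy point --- that $r\neq 0$ makes $(-h,r)\neq(0,0)$ uniformly in $h$, including $h=0$ --- is exactly the detail that makes the uniform application of Weil's bound legitimate.
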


\medskip
Now we start the proof
of Theorem \ref{Hennecart}
itself. Let $\alpha\ge0.3051$,
$A\subseteq \mathbb{F}_p\smallsetminus\{0\}$ of size $|A|=\alpha p$ and denote
$B=(A+A)\smallsetminus\{0\}$.
We assume that there exists $x\in\mathbb{F}_p\smallsetminus\{0\}$ such that
$x\not\in A(A+A)$. Then
\begin{equation}\label{eqpart}
xB^{-1}\cap A=\varnothing,\quad (xA^{-1}-A)\cap A=\varnothing.
\end{equation}
It follows that $|A|+|B|\le p-1$, since otherwise
$AB=\mathbb{F}_p\smallsetminus\{0\}$. Hence
$|A+A|\le|B|+1\le p-|A|$. 

\medskip
We define
\begin{align*}
r_1(y)&=|\{(a,b)\in A\times A\,:\, y=xa^{-1}-b\}|,\\
r_2(y)&=|\{(c,d)\in A\times A\,:\, c+d\ne0\text{ and }y=x(c+d)^{-1}\}|,
\end{align*}
and $E_i=\sum_{y\in\mathbb{F}_p} r_i(y)^2$, $i=1,2$, the corresponding energies.
Observe from \eqref{eqpart} that
$$
\sum_{\substack{y\in\mathbb{F}_p\\r_1(y)+r_2(y)>0}}1\le p-|A|.
$$
By Cauchy-Schwarz we get
\begin{equation}\label{eq1}
4|A|^4=\Big(\sum_{y\in\mathbb{F}_p}\big(r_1(y)+r_2(y)\big)\Big)^2\le (p-|A|)\times
\sum_{y\in\mathbb{F}_p}\big(r_1(y)+r_2(y)\big)^2.
\end{equation}
Expanding the later inner sum gives
$$
\sum_{y\in\mathbb{F}_p}\big(r_1(y)+r_2(y)\big)^2=E_1+E_2+2\sum_{y\in\mathbb{F}_p}r_1(y)r_2(y).
$$ 
Let 
$$
\gamma=\max_{h\ne0}\frac{\big|\widehat{1_A}(h)\big|}{|A|}
$$
We have by Parseval
$$
pE_2=\sum_h\big|\widehat{1_A}(h)\big|^4=|A|^4+\sum_{h\ne0}\big|\widehat{1_A}(h)\big|^4\le
|A|^4+\gamma^2|A|^2(p|A|-|A|^2)
$$
and
\begin{align*}
pE_1&=\sum_h\big|\widehat{1_{xA^{-1}}}(h)\big|^2\big|\widehat{1_A}(h)\big|^2=|A|^4+\sum_{h\ne0}
\big|\widehat{1_{xA^{-1}}}(h)\big|^2\big|\widehat{1_A}(h)\big|^2\\
& \le |A|^4+\gamma^2|A|^2(p|A|-|A|^2).
\end{align*}
Moreover
\begin{align*}
p\sum_{y\in\mathbb{F}_p}r_1(y)r_2(y) &= \sum_h\widehat{1_{xA^{-1}}}(h)\widehat{1_A}(-h)
\widehat{r_2}(h)\\ &\le 
|A|^4 +\max_{h\ne0}|\widehat{r_2}(h)|
\sum_{h\ne0}
\big|\widehat{1_{xA^{-1}}}(h)\big|\big|\widehat{1_A}(h)\big|\\
&\le |A|^4 +\max_{h\ne0}|\widehat{r_2}(h)|(p|A|-|A|^2),
\end{align*}
by Parseval and Cauchy-Schwarz. For $h\ne 0$,
$$
\widehat{r_2}(h)  = \sum_{\substack{c,d\in A\\c+d\ne0}}e_p(hx(c+d)^{-1})
=
\frac1p\sum_{r}\sum_{z\ne0}\sum_{c,d\in A}e_p(r(c+d-z))e_p(hxz^{-1}),
$$
hence by the Parseval identity \eqref{eq:ParsAdditive} and Lemma
\ref{lmCompleteKloosterman}
$$
|\widehat{r_2}(h)|\le \frac1p\sum_{r}\big|\widehat{1_A}(r)\big|^2
\Big|
\sum_{z\ne0}e_p(hxz^{-1})
\Big|\ll \sqrt{p}|A|
$$
(similar arguments were used in \cite[Theorem 4]{Moshchevitin}).
We thus obtain from \eqref{eq1} and the above bounds
$$
2\alpha \le (1-\alpha)(2\alpha+\gamma^2(1-\alpha)+o(1)).
$$
This finally gives the lower bound
$$
\gamma\ge \frac{\sqrt{2}\alpha}{1-\alpha}+o(1).
$$
We are in position to apply Lemma \ref{Posnikova}, i). Let $A_1\subset A$ be such that 
$|A_1|\ge (1+\gamma)|A|/2$ and
$rA_1$ is included in an interval of length $p/2$ for some $r\ne0$. 
This shows that  $A_1$ is $2$-Freiman isomorphic\footnote{i.e. there exists a
bijection $f:A_1\longrightarrow A'_1$ such that $a+b=c+d\iff f(a)+f(b)=f(c)+f(d)$
for all $a,b,c,d\in A_1$.} to a subset $A'_1$ of $\Z$.
So we seek to apply Proposition \ref{propAR} to $A'_1$.
We get
\begin{equation}\label{eqalpha1}
\alpha_1=\frac{|A_1|}p\ge
f(\alpha)+o(1):=\frac{(1+(\sqrt2-1)\alpha)\alpha}{2(1-\alpha)}+o(1),
\end{equation}
and
\begin{equation}\label{eqc1}
c_1=\frac{|A_1+A_1|}{|A_1|}\le \frac{|A+A|}{|A_1|}\le \frac{(1-\alpha)p}{\alpha_1 p}
\le \frac{1-\alpha}{f(\alpha)}+o(1).
\end{equation}
In order to have $c_1<3$, it is sufficient to have that 
\begin{equation*}
\alpha>\frac{7-\sqrt{9+24\sqrt2}}{10-6\sqrt2}=0.29513\dots
\end{equation*}
which is satisfied since we have assumed $\alpha \geq 0.3051$.
We thus obtain that $A_1$ (resp. $A_1+A_1$) is contained inside an arithmetic 
progression 
$P_1$ (resp. $Q_1=P_1+P_1$) of length $|P_1|=|A_1+A_1|-|A_1|+1$ (resp. $2|P_1|-1$). 

\medskip
We define $B_1=(A_1+A_1)\smallsetminus\{0\}$ and $Q_1^*=Q_1\smallsetminus\{0\}$.
We need to estimate
$$
T=\frac1p\sum_{r\bmod{p}}\sum_{\substack{a\in P_1 \\ b\in Q_1^*}}e_p(r(a-b^{-1}x))
\ge\frac{|P_1||Q_1^*|}{p}-\frac1p \sum_{0<|r|<p/2}|\widehat{1_{P_1}}(r)|
|\widehat{1_{{Q_1^*}^{-1}}}(rx)|
$$ 
which counts the solutions $(a,b)\in P_1\times Q_1^*$
to the equation $a=b^{-1}x$.\\
Now $|\widehat{1_{P_1}}(r)|\ll {p}/{|r|}$ 
by Lemma
\ref{FourierAP}
and $|\widehat{1_{{Q_1^*}^{-1}}}(rx_0)|\ll \sqrt{p}\log p$
by Lemma \ref{lmKloosterman}
because $Q_1^*$ is the union of at most two arithmetic progressions.

As a result, we have
$$
T\ge\frac{|P_1||Q_1^*|}{p}+O(\sqrt{p}(\log p)^2). 
$$
The number of solutions to $a=b^{-1}x$ with $a\in P_1\smallsetminus A_1$ or $b\in
Q_1^*\smallsetminus B_1$ is at most $|P_1|-|A_1|+|Q_1^*|-|B_1|$. Since by assumption
there is no solution to $a=b^{-1}x$ with
$(a,b)\in A_1\times B_1$ we get
$$
T\le |P_1|-|A_1|+|Q_1^*|-|B_1|
$$
yielding
$$
\frac{|P_1||Q_1^*|}{p}\le |P_1|-|A_1|+|Q_1^*|-|B_1| +O(\sqrt{p}(\log p)^2).
$$
This implies
$$
\frac{(|B_1|-|A_1|)^2}p \le |B_1|-2|A_1|+O(\sqrt{p}(\log p)^2)
$$
whence
$$
\alpha_1(c_1-1)^2\le c_1-2+o(1).
$$
Because of \eqref{eqalpha1}, this gives 
\begin{equation}\label{eqeq}
f(\alpha)\times (c_1-1)^2-c_1+2\le o(1).
\end{equation}
The left-hand side of this inequality defines a function of $c_1$ which is
decreasing in the range $2\le c_1\le 1+1/(2f(\alpha))$. 
contradiction.
We check easily that $\alpha+f(\alpha)\ge1/2$ whenever 
$\alpha\ge 0.3$.
Hence for such $\alpha$
$$
\frac{1-\alpha}{f(\alpha)}\le 1+\frac1{2f(\alpha)}.
$$
We thus obtain from \eqref{eqc1} and \eqref{eqeq}
$$
f(\alpha)\left(\frac{1-\alpha}{f(\alpha)}-1\right)^2-\frac{1-\alpha}{f(\alpha)}+2\le
o(1),
$$
which reduces to
$$
(1-\alpha-f(\alpha))^2-(1-\alpha-2f(\alpha))\le o(1).
$$
In view of the definition of $f(\alpha)$ in \eqref{eqalpha1}, we get by expanding
the above formula
$$
(11-6\sqrt{2})\alpha^3-(22-6\sqrt{2})\alpha^2+17\alpha-4\le o(1),
$$ 
giving $\alpha <  0.305091+o(1)$, a contradiction for all $p$ large enough.
This concludes the proof of Theorem~\ref{Hennecart}.

\begin{rem} Using instead the sharpest result ii) of  Lemma \ref{Posnikova} leads to
a  slight improvement: 
if $|A|\ge 0.30065p$ then $\mathbb{F}_p\smallsetminus\{0\}\subseteq A(A+A)$ for any
large $p$.
The improvement is very small and uses non-algebraic expressions, which
is why we decided not to exploit it.

\end{rem}
\section{Proof of Theorem \ref{almostAll}}
We will now use multiplicative characters
of $\F_p$. We denote by $\mathfrak{X}$  the set of all multiplicative characters
modulo $p$ and by $\chi_0$ the trivial character.
In this context Parseval's identity is the statement that
\begin{equation}\label{eqpars}
\frac{1}{p-1}\sum_{\chi\in\mathfrak{X}}\bigg|{\sum_{x\in
\F_p\smallsetminus\{0\}}f(x)\chi(x)}\bigg|^2=\sum_{x\in
\F_p\smallsetminus\{0\}}\abs{f(x)}^2.
\end{equation}

We state and prove a lemma which is a multiplicative analogue
of a lemma of Vinogradov \cite{vino} (see also \cite[Lemma 7]{sarko}),
according to which
\begin{equation}
\label{eq:vinogradov}
\abs{\sum_{(x,y)\in A\times B}e_p(xy)}\leq \sqrt{p\abs{A}\abs{B}}.
\end{equation}

\begin{lm}
\label{characterSums}
For any subsets $A,B$ of $\F_p\smallsetminus\{0\}$
and any nontrivial character $\chi\in\mathfrak{X}$, we have
$$
\bigg|\sum_{(y,z)\in A\times B}\chi(y+z)\bigg|\leq
\left(\abs{A}\abs{B}p\right)^{1/2}\left(1-\frac{\abs{B}}p\right)^{1/2}
$$
\end{lm}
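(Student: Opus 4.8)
The plan is to reduce this to a second-moment (Cauchy--Schwarz) estimate, placing the full weight on the $A$-variable so that the $B$-variable can be summed completely and exactly. Writing $S=\sum_{y\in A}\sum_{z\in B}\chi(y+z)$, I would first view the inner sum as a function of $y$ and apply Cauchy--Schwarz, extending the outer range from $A$ to all of $\F_p$ at the cost of a factor $|A|$:
$$
|S|^2\le |A|\sum_{y\in\F_p}\Big|\sum_{z\in B}\chi(y+z)\Big|^2.
$$
This already accounts for the asymmetric shape of the target inequality: the factor $(1-|B|/p)^{1/2}$ is destined to come out of the inner $B$-sum, whereas $|A|$ is merely the length of the outer range.

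Next I would expand the modulus squared and interchange summations, writing
$$
\sum_{y\in\F_p}\Big|\sum_{z\in B}\chi(y+z)\Big|^2=\sum_{z_1,z_2\in B}\;\sum_{y\in\F_p}\chi(y+z_1)\overline{\chi(y+z_2)}.
$$
The inner sum over $y$ is then evaluated exactly according to whether $z_1=z_2$. On the diagonal $z_1=z_2$ one gets $\sum_{y}|\chi(y+z_1)|^2=p-1$, since $|\chi(u)|=1$ for $u\neq 0$ and $\chi(0)=0$. Off the diagonal, setting $d=z_1-z_2\neq 0$ and substituting $u=y+z_2$ and then $v=du^{-1}$, the sum collapses to $\sum_{v\neq 0}\chi(1+v)=\sum_{w\in\F_p}\chi(w)-\chi(1)=-1$, where I use the orthogonality relation $\sum_{w}\chi(w)=0$ valid for any nontrivial $\chi$.

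Collecting the $|B|$ diagonal contributions and the $|B|^2-|B|$ off-diagonal ones gives $|B|(p-1)-(|B|^2-|B|)=|B|(p-|B|)$, so that $|S|^2\le |A|\,|B|(p-|B|)$ and, after taking square roots, the claimed bound. The only point requiring care is the bookkeeping around $\chi(0)=0$: in the off-diagonal sum the single value $u=0$ (that is, $y=-z_2$) contributes nothing, and it is precisely the removal of this term that turns a would-be full character sum into $\sum_{w\neq 1}\chi(w)=-1$ rather than $0$. I do not expect any genuine obstacle beyond this; the argument is a direct and short second-moment computation.
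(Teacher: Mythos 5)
Your proof is correct: the Cauchy--Schwarz step placing the weight on the $A$-variable, the completion of the $y$-sum to all of $\F_p$, and the exact evaluation of the off-diagonal complete sum as $\sum_{v\neq 0}\chi(1+v)=-1$ (using $\chi(0)=0$ and orthogonality) together give exactly $|S|^2\le |A|\,|B|(p-|B|)$, which is the stated bound including the sharp factor $\left(1-\frac{|B|}{p}\right)^{1/2}$. The paper announces but does not actually print a proof of this lemma, describing it only as a multiplicative analogue of Vinogradov's bilinear bound \eqref{eq:vinogradov}, and your second-moment computation is precisely the standard argument that analogy points to, so your approach coincides with the intended one.
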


\medskip
We now prove Theorem \ref{almostAll}.
Let $A$ be a subset of $\F_p\smallsetminus\{0\}$ and $\alpha =\abs{A}/p$.
We estimate the number of nonzero elements in $A(A+A)$ by estimating
the number $N$ of solutions to
$$
x(y+z)=x'(y'+z')\neq 0,\quad x,y,z,x',y',z'\in A, 
$$
which we can rewrite as $x'x^{-1}(y+z)^{-1}(y'+z')=1$.
This number is
\begin{align*}
N &= \frac{1}{p-1}\sum_{\chi \in \mathfrak{X}}
\bigg|{\sum_{y,z\in A}\chi(z+y)\sum_{x\in A}\chi(x)}\bigg|^2\\
&\leq \frac{\abs{A}^6}{p-1}+\max_{\chi\neq \chi_0}\bigg|{\sum_{y,z\in A}
\chi(y+z)}\bigg|^2\times\frac{1}{p-1}\sum_{\chi\neq \chi_0}\bigg|{\sum_{x\in
A}\chi(x)}\bigg|^2,
\end{align*}
hence by Lemma \ref{characterSums} and Parseval's identity \eqref{eqpars}
\begin{align*}
N&\leq
\frac{\abs{A}^6}{p-1}+p\abs{A}^2(1-\alpha)\bigg(\abs{A}-\frac{\abs{A}^2}{p-1}\bigg)\\
&\leq \frac{\abs{A}^6}{p-1}+p\abs{A}^3(1-\alpha)^2\\
&\leq \frac{\abs{A}^6}{p-1}\Big(1+p^2\abs{A}^{-3}(1-\alpha)^2\Big)\\
&\leq \frac{\abs{A}^6}{p-1}\Big(1+p^{-1}\alpha^{-3}(1-\alpha)^2\Big).
\end{align*}
We let $\rho(w)=\abs{\{(x,y,z)\in A\times A \times A\mid w=x(y+z)\}}$ for
$w\in\mathbb{F}_p$.
Then
$$
N=\sum_{w\in A(A+A)\smallsetminus\{0\}}\rho(w)^2\quad\text{and}\quad
\sum_{w\in A(A+A)\smallsetminus\{0\}}\rho(w)\ge |A|^6-|A|^4.
$$
Finally  $N$ is related to $\abs{A(A+A)}$ by the Cauchy-Schwarz 
inequality as follows
\begin{align*}
\abs{A(A+A)}\geq |A(A+A)\smallsetminus\{0\}|&\geq (\abs{A}^6-\abs{A}^4) N^{-1}\\
&\geq
(p-1)\Big(1-\alpha^{-2}p^{-2}\Big)\Big(1+p^{-1}\alpha^{-3}(1-\alpha)^2\Big)^{-1}\\
&> p-1-\alpha^{-3}(1-\alpha)^2+o(1).
\end{align*}
This concludes the proof of Theorem \ref{almostAll}.\qed

\section{Proof of Theorem \ref{notquite}}
First we need a lemma.
\begin{lm}
\label{subRandom}
Let $c<1/2$ and $p$ large enough.
Let $P=\{1,\ldots, \lceil cp\rceil\}$.
Then the set $(P+P)^{-1}$ of the inverses (modulo $p$) of nonzero elements of $P+P$
has at most $2c^2p+O(\sqrt{p}(\log p)^2)$ common elements with $P$, that is,
$$\abs{(P+P)^{-1}\cap P}\leq 2c^2p+O(\sqrt{p}(\log p)^2).$$
\end{lm}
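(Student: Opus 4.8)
The plan is to count the intersection $(P+P)^{-1}\cap P$ as a number of solutions to the equation $ys=1$ with $y\in P$ and $s\in P+P$, and then to evaluate this count by the same additive-character method used in the proof of Theorem~\ref{Hennecart}. First I would record that, since $c<1/2$ and $P=\{1,\dots,\lceil cp\rceil\}$, the sumset $P+P=\{2,\dots,2\lceil cp\rceil\}$ is a genuine interval in $\F_p$ with no wraparound and does not contain $0$; in particular $P+P$ is a single arithmetic progression inside $\F_p\smallsetminus\{0\}$ with $\abs{P+P}=2\lceil cp\rceil-1$. Because $y\mapsto y^{-1}$ is a bijection, each $y\in(P+P)^{-1}\cap P$ corresponds to exactly one $s\in P+P$ with $ys=1$, so
$$
\abs{(P+P)^{-1}\cap P}=\#\{(y,s)\in P\times(P+P): ys=1\}.
$$

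Next I would expand this count using orthogonality of the additive characters, writing the indicator of $y=s^{-1}$ as $\frac1p\sum_{r}e_p(r(y-s^{-1}))$, so that
$$
\abs{(P+P)^{-1}\cap P}=\frac1p\sum_{r\bmod p}\widehat{1_{P}}(r)\,\overline{\widehat{1_{(P+P)^{-1}}}(r)}.
$$
Isolating the $r=0$ term gives the main term $\abs{P}\,\abs{P+P}/p=\lceil cp\rceil(2\lceil cp\rceil-1)/p=2c^2p+O(1)$, which already matches the target. It remains to show that the contribution of the frequencies $r\neq0$ is $O(\sqrt p(\log p)^2)$.

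For the error term I would use the crude split
$$
\frac1p\Big|\sum_{r\neq0}\widehat{1_{P}}(r)\,\overline{\widehat{1_{(P+P)^{-1}}}(r)}\Big|
\le\frac1p\Big(\max_{r\neq0}\big|\widehat{1_{(P+P)^{-1}}}(r)\big|\Big)\sum_{r\neq0}\big|\widehat{1_{P}}(r)\big|.
$$
Here $\sum_{r}\abs{\widehat{1_P}(r)}\ll p\log p$ by Lemma~\ref{FourierAP}, while $\max_{r\neq0}\abs{\widehat{1_{(P+P)^{-1}}}(r)}\ll\sqrt p\log p$ by Lemma~\ref{lmKloosterman}, applicable precisely because $P+P$ is a single arithmetic progression contained in $\F_p\smallsetminus\{0\}$. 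Multiplying these two bounds and dividing by $p$ yields the claimed error $O(\sqrt p(\log p)^2)$, giving in fact the two-sided estimate and in particular the stated upper bound.

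The computation is otherwise routine; the only point requiring care is the verification that $P+P$ is a single arithmetic progression avoiding $0$, since this is exactly what licenses the incomplete-Kloosterman bound of Lemma~\ref{lmKloosterman} for the inverse set $(P+P)^{-1}$. This is where the hypothesis $c<1/2$ enters: it guarantees $2\lceil cp\rceil<p$ for large $p$, preventing any wraparound that would either split $P+P$ into more than one progression or introduce $0$.
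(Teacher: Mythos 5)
Your proof is correct and takes essentially the same route as the paper's: the same orthogonality expansion of $\abs{P\cap(P+P)^{-1}}$ into additive characters, the same isolation of the main term $\abs{P}\abs{P+P}/p=2c^2p+O(1)$, and the same bound on the nonzero frequencies via Lemma~\ref{FourierAP} combined with Lemma~\ref{lmKloosterman}, yielding $O(\sqrt{p}(\log p)^2)$. Your explicit check that $c<1/2$ keeps $P+P$ a single progression in $\F_p\smallsetminus\{0\}$ (licensing the incomplete Kloosterman bound) is a point the paper leaves implicit, but otherwise the arguments coincide.
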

\begin{proof}
We note that
 $P+P=\{2,\ldots,2\lceil cp\rceil\}\subset \F_p\smallsetminus\{0\}$. 

Now we observe that
\begin{align*}
\abs{P\cap (P+P)^{-1}} &=\sum_{\substack{x\in P\\ y\in P+P\\
x=y^{-1}}}1=\frac{1}{p}\sum_{t\in \F_p}
\sum_{\substack{x\in P\\ y\in P+P}}e_p(t(x-y^{-1}))\\
&=
\frac{1}{p}\sum_{t\in \F_p}
\sum_{x\in P}e_p(tx)\sum_{y\in P+P}e_p(-ty^{-1})
\end{align*}
Using Lemmas \ref{FourierAP} and \ref{lmKloosterman}, 
we find
that
\begin{align*}
\abs{P\cap (P+P)^{-1}} &
=\frac{\abs{P}\abs{P+P}}p+\frac{1}{p}\sum_{t\in\mathbb{F}_p\smallsetminus\{0\}}\widehat{1_P}(t)\widehat{1_{(P+P)^{-1}}}(-t)\\
& =2c^2p+O(\sqrt{p}(\log p)^2).\qedhere
\end{align*}
\end{proof}

\bigskip
Now we prove Theorem \ref{notquite}.

\medskip
Let $c<1/2$ (to be determined later) and $p$ large enough.
Let $P=\{1,\ldots, \lceil cp\rceil\}$.
Let $A=P\smallsetminus (P+P)^{-1}$. It satisfies
$A\cap (A+A)^{-1}=\varnothing$, i.e. $1\neq A(A+A)$, and
has cardinality at least
$cp-2c^2p-O(\sqrt{p}(\log p)^2)$.
To optimise, we take $c=1/4$, in which case $\abs{A}\geq p/8-O(\sqrt{p}(\log p)^2)$.
For any $\epsilon>0$, for $p$ large enough, this is at least
$(1/8-\epsilon)p$, whence the first part of the theorem.

\medskip
For the second part, we note that
Lemma \ref{subRandom} provides a bound for the cardinality
$\abs{P\cap x(P+P)^{-1}}$ for any $x$,  so for any
$k\leq p-1$ we can get a set a of
size
$cp-2kc^2p-O(k\sqrt{p}(\log p)^2)$
so that
$A(A+A)$ misses $0$ and $k$ nonzero elements.
The main term is optimised for $c=1/4k$, where it is worth
$p/8k$.
Taking $k$ of size 
$p^{1/4}(\log p)^{-3/2}$, the error term is significantly smaller than the main term
(for large $p$), so we obtain a set $A$
of size  $\Omega(p^{3/4}(\log p)^{3/2})$ for which $A(A+A)$ misses at least
$p^{1/4}(\log p)^{-3/2}$ elements.
This is even a slightly stronger statement than claimed.\qed

\section{Final remarks}

\subsection{}
Let $p$ an odd prime, $a,b\in \mathbb{F}_p\smallsetminus\{0\}$ and assume that
$ba^{-1}=c^2$ is a square. 
Let $A\subset\mathbb{F}_p\smallsetminus\{0\}$. Then 
$a\not\in A(A+A)$ if and only if $b\not\in cA(cA+cA)=c^2A(A+A)$. Moreover $|cA|=|A|$.

We denote
$$
m_p=\max\big\{|A|\,:\, A\subseteq \mathbb{F}_p\smallsetminus\{0\}\text{ and
}A(A+A)\not\supseteq \mathbb{F}_p\smallsetminus\{0\}\big\}.
$$
From the above remark we have
$$
m_p=\max\{|A|\,:\, A\subseteq \mathbb{F}_p\smallsetminus\{0\}\text{ and }1\not\in
A(A+A)\text{ or }r\not\in A(A+A)\},
$$
where $r$ is any fixed nonsquare residue modulo $p$. 
By Theorems \ref{Hennecart} and \ref{notquite} we have
$$
3.277\dots \leq \liminf_{p\to\infty}\frac{p}{m_p}\leq
\limsup_{p\to\infty}\frac{p}{m_p} \le 8.
$$

\subsection{}
Let $p>3$ a prime number. The set $I$  of residues modulo $p$ in the interval
$\{r\in\mathbb{F}_p\,:\, p/3< r < 2p/3\}$ is  sum-free (i.e. $a+b\ne c$ for any
$a,b,c\in I$) and achieves the largest cardinality for those sets, namely
$|I|=\left\lfloor \frac{p+1}3\right\rfloor$, as it can be deduced from the
Cauchy-Davenport Theorem combined with the  fact
that $|I\cap(I+I)|=0$.

\medskip
 Let
$$
A=\{x\in I\,:\, x^{-1}\in I\}.
$$
Then $A=A^{-1}$ and $A$ is sum-free. It readily follows that $1\not\in A(A+A)$.
Moreover, since $I$ is an arithmetic progression, the events $x\in I$ and $x^{-1}\in
I$ are independent, so we may observe that $A$ has cardinality $\sim p/9$ as $p$
tends to infinity (it can be formally proved using Fourier analysis). This raises
the next question:\\[0.5em]
\textit{What is the largest size of a sum-free set
$A\subset\mathbb{F}_p\smallsetminus\{0\}$ such that $A=A^{-1}$ ?}\\[0.5em]
From Theorem \ref{Hennecart}, we deduce the following statement.

\begin{cor}\label{SFS}
Let $A\subset\mathbb{F}_p\smallsetminus\{0\}$ a sum-free set such that $A=A^{-1}$.
Then 
$|A|< 0.3051 p$ for any sufficiently large prime number $p$.
\end{cor}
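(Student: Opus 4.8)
The plan is to derive Corollary \ref{SFS} directly from Theorem \ref{Hennecart} by a contrapositive argument. The key observation is that the two hypotheses on $A$, namely that $A$ is sum-free and $A=A^{-1}$, together force $1\notin A(A+A)$, which in turn means $A(A+A)\not\supseteq\mathbb{F}_p\smallsetminus\{0\}$. Once this is established, Theorem \ref{Hennecart} applies in its contrapositive form: since $A\subseteq\mathbb{F}_p\smallsetminus\{0\}$ and $A(A+A)$ fails to contain all of $\mathbb{F}_p\smallsetminus\{0\}$, we must have $|A|<0.3051\,p$ for all sufficiently large $p$.

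First I would verify the claim that $1\notin A(A+A)$. Suppose for contradiction that $1=a(b+c)$ for some $a,b,c\in A$. Then $b+c=a^{-1}$. Since $A=A^{-1}$ and $a\in A$, we have $a^{-1}\in A$, so $b+c$ is an element of $A$ with $b,c\in A$. This directly contradicts the hypothesis that $A$ is sum-free (i.e. that $x+y\ne z$ for all $x,y,z\in A$). Hence no such representation exists and $1\notin A(A+A)$.

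Having shown $1\notin A(A+A)$, I would conclude that $A(A+A)$ does not contain $\mathbb{F}_p\smallsetminus\{0\}$, since $1$ is a nonzero element. Theorem \ref{Hennecart} states that $|A|\ge 0.3051\,p$ would imply $A(A+A)\supseteq\mathbb{F}_p\smallsetminus\{0\}$ for all large enough $p$; taking the contrapositive, the failure of the conclusion for large $p$ forces $|A|<0.3051\,p$. This completes the argument.

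I do not anticipate any serious obstacle here, as the corollary is an almost immediate consequence of the main theorem together with the elementary algebraic reformulation of the two hypotheses. The only point requiring minor care is the clean translation of ``sum-free'' and ``$A=A^{-1}$'' into the statement $1\notin A(A+A)$, but this is a one-line manipulation using the inversion symmetry. The quantitative threshold $0.3051$ is inherited verbatim from Theorem \ref{Hennecart}, so no further optimization is needed.
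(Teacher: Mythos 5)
Your proposal is correct and matches the paper's own deduction exactly: the paper likewise observes that sum-freeness together with $A=A^{-1}$ forces $1\notin A(A+A)$ (your one-line verification, $1=a(b+c)\Rightarrow b+c=a^{-1}\in A$, is precisely the ``readily follows'' step the paper leaves implicit) and then invokes Theorem \ref{Hennecart} in contrapositive form. Nothing is missing.
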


This is related to the question of how large a sum-free multiplicative
subgroup of $\F_p^*$ can be. Alon and Bourgain showed \cite{AlonBourgain} that it
can be at least 
$\Omega(p^{1/3})$.

\subsection{}
Let $A\subset\mathbb{F}_p\smallsetminus\{0\}$ with $\alpha=|A|/p\gg1$, and let us
denote $A_s=A\cap(A+s)$. 
Let $0<\epsilon< 1$ be defined by 
$$
E^+(A)=\sum_{s\in A-A}|A_s|^2=(1-\epsilon)|A|^3,
$$
and $S$ be the subset of $A-A$ 
$$
S=\{s\in A-A\,:\, |A_s|>(1-\epsilon-p^{-1/3})|A|\}.
$$
Then
$$
E^+(A)\le (1-\epsilon-p^{-1/3})|A|\sum_{s\not\in
S}|A_s|+|A|^2|S|=(1-\epsilon-p^{-1/3})|A|^3+|A|^2|S|,
$$
from which we deduce 
\begin{equation}\label{eq|S|}
|S|\ge |A|p^{-1/3}.
\end{equation}

Assume that $A=A^{-1}$ and let $N$ be the number of solutions to equation
$$
(a-s)(b-t)=1,\quad (s,a,t,b)\in S\times A_s\times S\times A_t.
$$
For fixed $s,t\in S$, we have 
\begin{align*}
|(A-s)\cap(A_t-t)^{-1}|&=|A_s|+|A_t|-|(A-s)\cap(A_t-t)^{-1}|\\
&\ge 
2(1-\epsilon - o(1))|A|-|A|=(1-2\epsilon-o(1))|A|
\end{align*}
since $A_s-s\subset A$ and $(A_t-t)^{-1}\subset A^{-1}=A$. This yields
\begin{equation}\label{eqN}
N\ge (1-2\epsilon-o(1))|A||S|^2.
\end{equation}
On the other hand, denoting $r(x)=|\{(a,s)\in A\times S\,:\, x(a-s)=1\}|$, we have
$$
N \le \frac1p\sum_{h}\widehat{1_A}(h)\widehat{1_S}(-h)\widehat{r}(-h)\le
\frac{|A|^2|S|^2}{p}
+\max_{h\ne0}|\widehat{r}(h)|\times\frac1p\sum_{h}|\widehat{1_A}(h)\widehat{1_S}(h)|.
$$
By adapting equation \eqref{eq:vinogradov} we get $\max_{h\ne0}|\widehat{r}(h)|\leq \sqrt{p|A||S|}$ and by
Cauchy-Schwarz 
and Parseval we derive $N\le {|A|^2|S|^2}/{p}+O(\sqrt{p}|A||S|)$. Combined with
\eqref{eqN}, this gives
$$
\alpha + O({\sqrt{p}}{|S|^{-1}})\ge 1-2\epsilon-o(1),
$$
yielding by \eqref{eq|S|}, that $\epsilon\ge (1-\alpha)/2+o(1)$. 
Hence when $A=A^{-1}$
$$
E^+(A)\le \frac{1+\alpha+o(1)}2|A|^3.
$$
Together with Theorem \ref{Hennecart}, this implies the following result.
\begin{prop}
Let $A\subset \F_p^*$ be as in Corollary \ref{SFS}. Then for large enough $p$ the additive energy satisfies
$$
{E^+(A)}\le 0.6526{|A|^3}.
$$
\end{prop}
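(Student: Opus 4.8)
The plan is to combine the additive-energy inequality derived in the preceding subsection with the density bound supplied by Corollary~\ref{SFS}, so that the proposition becomes a one-line corollary of facts already established.

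First I would invoke the computation carried out just above, which holds for any $A\subseteq\F_p\smallsetminus\{0\}$ with $A=A^{-1}$ and produces the bound
$$
E^+(A)\le \frac{1+\alpha+o(1)}2\,|A|^3,
$$
where $\alpha=|A|/p$. The crucial structural feature is that the coefficient $(1+\alpha)/2$ is increasing in $\alpha$, so any upper bound on the density transfers directly to an upper bound on the normalized energy $E^+(A)/|A|^3$. This is why a density restriction is exactly what is needed to pin down the numerical constant.

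Next I would bring in the hypothesis that $A$ is sum-free with $A=A^{-1}$: under precisely these assumptions, Corollary~\ref{SFS} guarantees $\alpha<0.3051$ for every sufficiently large prime $p$. Substituting into the energy bound yields
$$
E^+(A)\le \frac{1+0.3051+o(1)}2\,|A|^3=(0.65255+o(1))\,|A|^3,
$$
and for $p$ large enough the $o(1)$ term is below $0.6526-0.65255=5\times10^{-5}$, giving $E^+(A)\le 0.6526\,|A|^3$.

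I do not expect a genuine obstacle here, since the proposition is a direct corollary: the two nontrivial ingredients (the energy inequality for sets with $A=A^{-1}$, and the density bound of Corollary~\ref{SFS}) have already been proved, and the remaining work is the elementary verification that $(1+0.3051)/2<0.6526$ together with the routine absorption of the $o(1)$ error into the stated constant for all large $p$.
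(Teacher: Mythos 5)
Your proposal is correct and matches the paper's own argument exactly: the paper likewise combines the bound $E^+(A)\le \frac{1+\alpha+o(1)}2|A|^3$ (established just above for sets with $A=A^{-1}$) with the density bound $\alpha<0.3051$ from Corollary~\ref{SFS}, yielding $(1+0.3051)/2=0.65255<0.6526$ with the $o(1)$ absorbed for large $p$. Nothing is missing.
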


By considering similarly the multiplicative energy of $A$, it is possible to get the
following sum-product upper bound  for an arbitrary $A \subset \F_p$:  
$$
2E^+(A)+E^{\times}(A)\le (2+\alpha+o(1))|A|^3.
$$

%
%
%
%
%


\begin{thebibliography}{10}
\bibitem{Aksoy}
Esen Aksoy~Yazici, Brendan Murphy, Misha Rudnev, and Ilya Shkredov.
\newblock Growth estimates in positive characteristic via collisions.
\newblock {\em Int. Math. Res. Not. IMRN}, (23):7148--7189, 2017.

\bibitem{AlonBourgain}
Noga Alon and Jean Bourgain.
\newblock Additive patterns in multiplicative subgroups.
\newblock {\em Geom. Funct. Anal.}, 24(3):721--739, 2014.

\bibitem{Frei}
G.~A. Fre\u{\i}man.
\newblock Inverse problems of additive number theory. {VII}. {T}he addition of
  finite sets. {IV}. {T}he method of trigonometric sums.
\newblock {\em Izv. Vys\v{s}. U\v{c}ebn. Zaved. Matematika}, 1962(6
  (31)):131--144, 1962.

\bibitem{HH}
Norbert Hegyv\'{a}ri and Fran\c{c}ois Hennecart.
\newblock A note on the size of the set {$A^2+A$}.
\newblock {\em Ramanujan J.}, 46(2):357--372, 2018.

\bibitem{Lev}
Vsevolod~F. Lev.
\newblock Distribution of points on arcs.
\newblock {\em Integers}, 5(2):A11, 6, 2005.


\bibitem{Moshchevitin}
Nikolay~G. Moshchevitin.
\newblock Sets of the form $A+B$ and finite continued fractions. 
\newblock {\em Sb. Math.}, 198:4 (2007), 537--557. 


\bibitem{Nath}
Melvyn~B. Nathanson.
\newblock {\em Additive number theory}, volume 165 of {\em Graduate Texts in
  Mathematics}.
\newblock Springer-Verlag, New York, 1996.
\newblock Inverse problems and the geometry of sumsets.

\bibitem{Roche}
Oliver Roche-Newton, Misha Rudnev, and Ilya~D. Shkredov.
\newblock New sum-product type estimates over finite fields.
\newblock {\em Adv. Math.}, 293:589--605, 2016.

\bibitem{sarko}
A.~S\'{a}rk\"{o}zy.
\newblock On sums and products of residues modulo {$p$}.
\newblock {\em Acta Arith.}, 118(4):403--409, 2005.

\bibitem{StevensZeuw}
Sophie Stevens and Frank de~Zeeuw.
\newblock An improved point-line incidence bound over arbitrary fields.
\newblock {\em Bull. Lond. Math. Soc.}, 49(5):842--858, 2017.

\bibitem{vino}
I.~M. Vinogradov.
\newblock {\em An introduction to the theory of numbers}.
\newblock Pergamon Press, London \& New York, 1955.
\newblock Translated by H. Popova.

\bibitem{Weil}
Andr\'{e} Weil.
\newblock On some exponential sums.
\newblock {\em Proc. Nat. Acad. Sci. U. S. A.}, 34:204--207, 1948.
\end{thebibliography}
\end{document}